
\documentclass{cimart}
\usepackage{xcolor, amstext, amsfonts, amssymb, amsbsy, latexsym}
\usepackage{enumerate}
\usepackage{hyperref}
\usepackage[T1]{fontenc}
\usepackage{xy, hhline}
\usepackage{soul}
\usepackage[numbers,sort&compress]{natbib}
\vfuzz2pt 
\hfuzz2pt 


\newcommand{\St}{\mathop{\rm St}}

\numberwithin{equation}{section}

\newenvironment{eq}{\begin{equation}}{\end{equation}}

\newcommand{\Char}{\mathop{\rm char}}

\newcommand{\FF}{\mathbb{F}}

\newcommand{\RR}{\mathbb{R}}

\newcommand{\algA}{\mathcal{A}}
\newcommand{\algV}{\mathcal{V}}







\newcommand{\al}{\alpha}
\newcommand{\be}{\beta}
\newcommand{\ga}{\gamma}

\newcommand{\de}{\delta}

\newcommand{\LA}{\langle}
\newcommand{\RA}{\rangle}
\newcommand{\Id}[1]{{{\rm Id}({#1})}}
\newcommand{\ov}[1]{\overline{#1}}

\newcommand{\tr}{\mathop{\rm tr}}
\newcommand{\Aut}{\mathop{\rm Aut}}

\newcommand{\G}{{\rm G}_2}
\newcommand{\SL}{{\rm SL}}


\newcommand{\matr}[4]{\left(\begin{array}{cc}
#1 & #2 \\
#3 & #4 \\
\end{array}\right)}

\newcommand{\OO}{\mathbf{O}}

\newcommand{\uu}{\mathbf{u}}
\newcommand{\vv}{\mathbf{v}}
\newcommand{\cc}{\mathbf{c}}
\newcommand{\zero}{\mathbf{0}}

\newcommand{\new}[1]{#1}        
\newcommand{\del}[1]{}          

\title{
    On polynomial equations over split octonions
    }

\author{
    Artem Lopatin, Alexander Rybalov
    }

\authorinfo[Artem Lopatin]{State University of Campinas, Brazil}{dr.artem.lopatin@gmail.com}

\authorinfo[Alexander Rybalov]{Sobolev Institute of Mathematics, Russia}{alexander.rybalov@gmail.com}

\abstract{Working over the split octonions over an algebraically closed field, we solve all polynomial equations in which all the coefficients but the constant term are scalar.  As  a consequence,  we calculate the $n^{\rm -th}$ roots of an octonion.}

\keywords{Polynomial equations, Octonions, Positive characteristic.}

\msc{17A75 (primary); 17D05, 20G41 (secondary).
    }

\begin{document}

\VOLUME{33}
\YEAR{2025}
\ISSUE{3}
\NUMBER{8}
\DOI{https://doi.org/10.46298/cm.14879}

\section{Introduction}

Assume that $\FF$ is a field of an arbitrary characteristic $p=\Char\FF\geq0$. All vector spaces and algebras are over $\FF$. 

The problem of solving polynomial equations was historically considered as one of key problems in mathematics, which influenced the creation of algebraic geometry and other branches of mathematics. Polynomial equations were considered not only over fields, but also over matrix algebras, algebras of quaternions, octonions, etc. Rodríguez-Ordó\~nez~\cite{Rodriguez-Ordonez_2007} proved that every polynomial equation over the algebra $\mathbf{A}_{\RR}$ of {\it Cayley numbers} (i.e., the {\it division algebra of real octonions}) of positive degree with the only term of the highest degree has a solution. An explicit algorithm for a solution of the quadratic equations $x^2 + bx +c = 0$ over $\mathbf{A}_{\RR}$ was obtained by Wag, Zhang and Zhang~\cite{Wang_Zhang_2014} together with criterions whether this equation has one, two or infinitely many solutions. 

In general, an {\it octonion algebra} $\mathbf{C}$ (or a {\it Cayley algebra}) over the field $\FF$ is a non-associative alternative unital algebra of dimension $8$, endowed with a non-singular quadratic multiplicative form $n: \mathbf{C} \to \FF$, which is called the {\it norm}. The norm $n$ is called {\it isotropic} if $n(a)=0$ for some non-zero $a\in \mathbf{C}$, otherwise the norm $n$ is {\it anisotropic}. In case $n$ is anisotropic, the octonion algebra $\mathbf{C}$ is a division algebra. In case $n$ is isotropic, there exists a unique octonion algebra $\OO_{\FF}$ over $\FF$ with isotropic norm (see Theorem 1.8.1 of~\cite{Springer_Veldkamp_book_2000}). This algebra is called the {\it split octonion algebra}. Note that, if $\FF$ is algebraically closed, then any octonion algebra is \new{isomorphic to} the split octonion algebra $\OO_{\FF}$ (for example, see Lemma 2.2 from~\cite{Lopatin_Zubkov_Eq_over_O}). Since Artin's theorem claims that in an alternative algebra every subalgebra generated by two elements is associative, then any octonion algebra is power-associative, i.e., the subalgebra generated by a single element is associative. Therefore, given $a\in \mathbf{C}$, we can write down $a^n$ without specifying the brackets in the product.

Flaut and Shpakivskyi~\cite{Flaut_Shpakivskyi_2015} considered the equation $x^n=a$ over real octonion division algebras. \new{For} an octonion division algebra $\mathbf{C}$ over an arbitrary field $\FF$, Chapman~\cite{Chapman_2020_JAA} presented a complete method \new{for finding} the solutions of the polynomial equation $a_n x^n + a_{n-1} x^{n-1} + \cdots + a_1 x + a_0 =0$ over $\mathbf{C}$. Moreover, Chapman and Vishkautsan~\cite{Chapman_Vishkautsan_2022}, working over a division algebra $\mathbf{C}$, determined the solutions of the polynomial equation  $(a_n c) x^n + (a_{n-1}c) x^{n-1} + \cdots + (a_1c) x + (a_0c) =0$  and discussed the solutions of the polynomial equation $(ca_n) x^n + (ca_{n-1}) x^{n-1} + \cdots + (ca_1) x + (ca_0) =0$. Chapman and  Levin~\cite{Chapman_Levin_2023} described a method for finding  so-called ``alternating roots'' of polynomials over an arbitrary division Cayley-Dickson algebra. Chapman and Vishkautsan~\cite{Chapman_Vishkautsan_2025} examined \new{the conditions under which},  for a root $a$ of a polynomial $f(x)$ over a general Cayley–Dickson algebra, there exists a factorization $f(x)=g(x)(x-a)$ for some polynomial $g(x)$.
The case of polynomial equations over an arbitrary algebra has recently been considered by Illmer and Netzer in~\cite{Illmer_Netzer_2024}, where  some conditions were determined  that imply the existence of a common solution of $n$ polynomial equations in $n$ variables,  with an application to polynomial equations over $\mathbf{A}_{\RR}$.

Assume that $\algA$ is a unital algebra over $\FF$. Consider a general polynomial equation $f(a_1,\ldots,a_m,x)=0$ over $\algA$, i.e., $f(a_1,\ldots,a_m,x)$ is an element of the absolutely free unital algebra $\FF\LA a_1,\ldots,a_m,x\RA$ with free generators $a_1,\ldots,a_m,x$. Here $x\in\algA$ is a variable and $a_1,\ldots,a_m\in\algA$ are coefficients. We \new{aim} to describe a method \new{for calculating} $x$ \new{when} $a_1,\ldots,a_m$ are given. Note that we can also consider $f(a_1,\ldots,a_m,x)$ as an element of the relatively free algebra $\FF_{\algA}\LA a_1,\ldots,a_m,x\RA:=\FF\LA a_1,\ldots,a_m,x\RA / \Id{\algA}$ for $\algA$, where $\Id{\algA}$ stands for the T-ideal of all polynomial identities for $\algA$. \new{By} knowing an $\FF$-basis for $\FF_{\algA}\LA a_1,\ldots,a_m,x\RA$, we can obtain a canonical form for $f(x)$. Therefore,   a problem of solving a polynomial equation over an algebra $\algA$ is tightly connected with the problem of explicit description of polynomial identities for $\algA$.  Although the theory of algebras with polynomial identities is a well-developed area of algebra with many deep results, there are still few results \new{offering} \new{an} explicit description of generators of T-ideals of polynomial identities for particular finite-dimensional algebras (see~\cite{  deMello_Souza_2023,  diniz2023two, diniz2024isomorphism,drensky2019varieties, Iritan_Alex_Artem_PIs_Novikov}


for recent results). Due to the difficulty of solving a general equation $f(a_1,\ldots,a_m,x)=0$ over $\algA$, it \new{may} be interesting to consider this equation over some vector subspace $\algV\subset \algA$ generating the algebra $\algA$, i.e., to assume that $a_1,\ldots,a_m,x\in\algV$. In this case, instead of polynomial identities, we should use so-called weak polynomial identities for the pair $(\algA,\algV)$ (see survey~\cite{Drensky_2021} and papers~\cite{Kislitsin_2022, Lopatin_Rodriguez_II, Lopatin_Rodriguez_III, Lopatin_Rodriguez_Tang_IV} for recent results on weak polynomial identities).

The split-octonions have numerous applications to physics. As an example, the Dirac equation, \new{which describes} the motion of a free spin $1/2$ particle, \new{such as} an electron or a proton, can be represented by the split-octonions (see~\cite{Gogberashvili_2006, Gogberashvili_2006_Dirac, Gogberashvili_Gurchumelia_2024}). There exist applications of split-octonions to electromagnetic theory (see~\cite{Chanyal_2017_CommTP, Chanyal_Bisht_Negi_2013, Chanyal_Bisht_Negi_2011}), geometrodynamics (see \cite{Chanyal_2015_RepMP}), unified quantum theories (see~\cite{Krasnov_2022, Bisht_Dangwal_Negi_2008, Castro_2007}), special relativity (see~\cite{Gogberashvili_Sakhelashvili_2015}).

In a recent paper, Lopatin and Zubkov~\cite{Lopatin_Zubkov_Eq_over_O} considered the linear equations $ax=c$, $(ax)b=c$, $a(bx)=c$ over the split octonion algebra $\OO$ in case $\FF$ is algebraically closed. Note that, over a division octonion algebra, these equations can easily be solved, and for non-zero $a,b$ there is a unique solution. \new{However,} in the case of an algebraically closed field, the situation is drastically different. \new{Specifically},  the set $X$ of all solutions of one of the above linear equations with non-zero $a,b$ is either empty, or contain\new{s} the only element, or \new{the} affine variety $X$ has dimension $r$, where
\begin{enumerate}
\item[$\bullet$]  $r=4$ in case we consider the equation $ax=c$; 

\item[$\bullet$]   $r\in\{4,5,7\}$ in case we consider the equation $(ax)b=c$; 

\item[$\bullet$]  $r\in\{4,6,8\}$ in case we consider the equation $a(bx)=c$.
\end{enumerate}

In Sections~\ref{section_O} and~\ref{section_main}  we assume that the field $\FF$ is algebraically closed. In Section~\ref{section_O} we explicitly define the octonion algebra $\OO$ and its group of automorphisms $\Aut(\OO)=\G$. In Section~\ref{section_main} we solve every equation 
\begin{eq}\label{eq_main}
\al_n x^n + \al_{n-1} x^{n-1} + \cdots + \al_1 x = c
\end{eq}%
\noindent{}with scalar $\al_1,\ldots,\al_n\in \FF$ and possibly non-scalar constant term $c\in\OO$ with respect to the variable $x\in\OO$ (see Theorem~\ref{theo_main}). The solution of equation~(\ref{eq_main}) is obtained modulo solution of polynomial equations over $\FF$. In Corollary~\ref{cor_number_solutions} we explicitly describe the number of solutions of equation~(\ref{eq_main}). In Corollary~\ref{cor_main} we apply the obtained general result to the $n^{\rm -th}$ roots of $c\in\OO$, i.e., to the solutions of the equation $x^n=c$.

\section{Octonions}\label{section_O}

In this section we assume that the field $\FF$ is algebraically closed.

\subsection{Split-octonions}\label{section_split}

The {\it split octonion algebra} $\OO=\OO(\FF)$, also known as the {\it split Cayley algebra}, is the vector space of all matrices

$$a=\matr{\al}{\uu}{\vv}{\be}\text{ with }\al,\be\in\FF \text{ and } \uu,\vv\in\FF^3,$$%
together with the multiplication given by the following formula:
$$a a'  =
\matr{\al\al'+ \uu\cdot \vv'}{\al \uu' + \be'\uu - \vv\times \vv'}{\al'\vv +\be\vv' + \uu\times \uu'}{\be\be' + \vv\cdot\uu'},\text{ where } a'=\matr{\al'}{\uu'}{\vv'}{\be'},$$%
$\uu\cdot \vv = u_1v_1 + u_2v_2 + u_3v_3$ and $\uu\times \vv = (u_2v_3-u_3v_2, u_3v_1-u_1v_3, u_1v_2 - u_2v_1)$. For short, we denote  $\cc_1=(1,0,0)$, $\cc_2=(0,1,0)$,  $\cc_3=(0,0,1)$, $\zero=(0,0,0)$ from $\FF^3$. Consider the following basis of $\OO$: $$e_1=\matr{1}{\zero}{\zero}{0},\; e_2=\matr{0}{\zero}{\zero}{1},\; \uu_i=\matr{0}{\cc_i}{\zero}{0},\;\vv_i=\matr{0}{\zero}{\cc_i}{0}$$
for $i=1,2,3$. The unity of $\OO$ is denoted by $1_{\OO}=e_1+e_2$.  We identify octonions
$$\al 1_{\OO},\;\matr{0}{\uu}{\zero}{0},\; \matr{0}{\zero}{\vv}{0}$$
with $\al\in\FF$, $\uu,\vv\in\FF^3$, respectively. Note that $\uu_i \uu_j= (-1)^{\epsilon_{ij}}\vv_k$ and  $\vv_i \vv_j= (-1)^{\epsilon_{ji}}\uu_k$, where $\{i,j,k\}=\{1,2,3\}$ and $\epsilon_{ij}$ is the parity of the permutation 
$\left(
\begin{array}{ccc}
\!1\! & \!2\! & \!3\! \\
\!k\! & \!i\! & \!j\! \\
\end{array}
\right)$.


The algebra $\OO$ is endowed with the linear involution
$$\ov{a}=\matr{\be}{-\uu}{-\vv}{\al},$$%
which satisfies the equality $\ov{aa'}=\ov{a'}\ov{a}$, a {\it norm} $n(a)=a\ov{a}=\al\be-\uu\cdot \vv$, and a non-degenerate symmetric bilinear {\it form} $q(a,a')=n(a+a')-n(a)-n(a')=\al\be' + \al'\be - \uu\cdot \vv' - \uu'\cdot \vv$. Define the linear function {\it trace} by $\tr(a)=a + \ov{a} = \al+\be$. The subspace of traceless octonions is denoted by $\OO_0 = \{a\in\OO\, | \,\tr(a)=0\}$ and the affine variety of octonions with zero norm is denoted by $\OO_{\#}=\{a\in\OO\,|\,n(a)=0\}$. Notice that
\begin{eq}\label{eq1}
\tr(aa')=\tr(a'a) \text{ and } n(aa')=n(a)n(a').
\end{eq}%
The next quadratic equation holds:
\begin{eq}\label{eq2}
a^2 - \tr(a) a + n(a) = 0.
\end{eq}%
The algebra $\OO$ is a simple {\it alternative} algebra, i.e., the following identities hold for $a,b\in\OO$:
\begin{eq}\label{eq4}
a(ab)=(aa)b,\;\; (ba)a=b(aa).
\end{eq}%
Moreover, 
\begin{eq}\label{eq4b}
\ov{a}(ab) = n(a) b, \;\; (ba)\ov{a}=n(a) b.  
\end{eq}%

The following remark is well-known and can easily be proven.

\begin{remark}\label{remark_inv}
Given an $a\in \OO$, one of the following cases holds:
\begin{enumerate}
\item[$\bullet$] if $n(a)\neq0$, then there exist unique $b,c\in\OO$ such that $ba=1_{\OO}$ and $ac=1_{\OO}$; moreover, in this case we have $b=c=\ov{a}/n(a)$ and we denote $a^{-1}:=b=c$.  

\item[$\bullet$] if $n(a)=0$, then $a$ does not have a left inverse as well as a right inverse.
\end{enumerate}
\end{remark}

Equalities~(\ref{eq4b}) imply that for $a\not\in\OO_{\#}$ we have
\begin{eq}\label{eq_inv}
a^{-1}(ab) = b, \;\; (ba)a^{-1}=b.    
\end{eq}

\subsection{The group \texorpdfstring{$\G$}{G}}\label{section_G2} 

The group $\Aut(\OO)$ of all automorphisms of the algebra $\OO$ is the exceptional simple group $\G=\G(\FF)$.  The group $\G$ contains a Zariski closed subgroup $\SL_3=\SL_3(\FF)$. Namely, every $g\in\SL_3$ defines the following automorphism of $\OO$:
$$a\to \matr{\al}{\uu g}{\vv g^{-T}}{\be},$$
where $g^{-T}$ stands for $(g^{-1})^T$ and $\uu,\vv\in\FF^3$ are considered as row vectors. For every $\uu,\vv\in \OO$ define $\de_1(\uu),\de_2(\vv)$ from $\Aut(\OO)$ as follows:
$$\de_1(\uu)(a')=\matr{\al' - \uu\cdot \vv'}{(\al'-\be' - \uu\cdot \vv')\uu + \uu'}{\vv' - \uu'\times \uu}{\be' + \uu\cdot\vv'},$$
$$\de_2(\vv)(a')=\matr{\al' + \uu'\cdot \vv}{\uu' + \vv'\times \vv}{(-\al'+\be' - \uu'\cdot \vv)\vv + \vv'}{\be' - \uu'\cdot\vv}.$$
The group $\G$ is generated by  $\SL_3$ and $\de_1(t\uu_i),\de_2(t\vv_i)$ for all $t\in\FF$ and $i=1,2,3$ (for example, see Section 3 of~\cite{zubkov2018}). By straightforward calculations, it is easy to see that
\begin{eq}\label{eq_h}
\hbar:\OO\to\OO, \text{ defined by }a \to \matr{\be}{-\vv}{-\uu}{\al},
\end{eq}%
belongs to $\G$.

The action of $\G$ on $\OO$ satisfies the next properties:
$$\ov{ga}=g\ov{a},\; \tr(ga)=\tr(a),\;n(ga)=n(a),\; q(ga,ga')=q(a,a').$$%
Thus, $\G$ acts also on $\OO_0$ and $\OO_{\#}$. The group $\G$  acts diagonally on the vector space $\OO^n=\OO\oplus \cdots\oplus\OO$ ($n$ times) by $g(a_1,\ldots,a_n)=(ga_1,\ldots,g a_n)$
for all $g\in \G$ and $a_1,\ldots,a_n\in\OO$.  

\new{We fix a binary relation $<$ on the field $\FF$ such that for each pair $\al,\be\in \FF$ with $\al\neq\be$ exactly one of $\al<\be$ or $\be< \al$ holds. Note that, we do not assume that $<$ is transitive, and we do not assume compatibility with the field operations.}

\begin{proposition}[Part 1 of Proposition 3.3 from~\cite{LZ_1}]\label{prop_O_canon}
 The following set is a minimal set of representatives of $\G$-orbits on $\OO$: 
\begin{enumerate}
\item[1.] $\al 1_{\OO}$,

\item[2.] $\matr{\al_1}{\zero}{\zero}{\al_2}$ with $\al_1<\al_2$,

\item[3.] $\matr{\al}{(1,0,0)}{\zero}{\al}$,
\end{enumerate}
where $\al,\al_1,\al_2\in\FF$. In other words, $\OO$ is the \new{disjoint} union of the following $\G$-orbits: 
$$\al 1_{\OO},\;\; O_2(\al_1,\al_2) := \G (\al_1 e_1 + \al_2 e_2),\;\;  O_3(\al) := \G (\al 1_{\OO} + \uu_1),$$
where $\al_1<\al_2$, $\al,\al_1,\al_2\in \FF$.
\end{proposition}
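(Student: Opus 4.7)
The plan is to prove the proposition by identifying orbit invariants that separate the listed families, checking that the listed representatives are pairwise non-equivalent, and then reducing every $a\in\OO$ to one of the canonical forms. Since $\tr$ and $n$ are $\G$-invariant (see the properties listed just above~(\ref{eq_h})), the unordered pair $\{\al_1,\al_2\}$ of roots of the characteristic polynomial $x^2-\tr(a)x+n(a)$ from~(\ref{eq2}) is an orbit invariant; these roots lie in $\FF$ because $\FF$ is algebraically closed. A scalar $\al 1_\OO$ is $\G$-fixed, so it forms a singleton orbit; two elements within the $O_2$-family with different unordered pairs of roots, or within the $O_3$-family with different $\al$, lie in different orbits; and $O_2(\al_1,\al_2)$ with $\al_1<\al_2$ cannot coincide with any $O_3(\al)$ because the characteristic polynomial of the latter has a repeated root. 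The convention $\al_1<\al_2$ merely selects one of the two orderings in each unordered pair.

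For the reduction, fix a non-scalar $a\in\OO$ and let $\al_1,\al_2\in\FF$ be the roots of its characteristic polynomial. Set $b=a-\al_1 1_\OO$. Expanding $(a-\al_1 1_\OO)(a-\al_2 1_\OO)=0$, which is an equivalent form of~(\ref{eq2}), gives $b^2=(\al_2-\al_1)b$, so $\tr(b)=\al_2-\al_1$ and $n(b)=0$. If $\al_1\neq\al_2$, then $e:=b/(\al_2-\al_1)$ is an idempotent distinct from $0$ and $1_\OO$; once I prove the reduction lemma that every non-trivial idempotent of $\OO$ is $\G$-conjugate to $e_2$, an appropriate $g\in\G$ sends $a$ to $\al_1 1_\OO+(\al_2-\al_1)e_2=\al_1 e_1+\al_2 e_2\in O_2(\al_1,\al_2)$. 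If instead $\al_1=\al_2=\al$, then $b\neq 0$ satisfies $\tr(b)=0$, $n(b)=0$, and $b^2=0$; once I prove the companion lemma that every nonzero $b$ with $b^2=0$ is $\G$-conjugate to $\uu_1$, the same procedure sends $a$ to $\al 1_\OO+\uu_1\in O_3(\al)$.

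The main obstacle is the pair of reduction lemmas above, which I would prove in parallel using the explicit generators of $\G$ from Section~\ref{section_G2}. Writing the element under reduction as $\matr{\be_1}{\uu}{\vv}{\be_2}$ with $\be_1+\be_2$ and $\be_1\be_2-\uu\cdot\vv$ fixed by the prescribed $\tr$ and $n$, I would first apply $\hbar$ from~(\ref{eq_h}) if necessary to guarantee $\uu\neq\zero$, then use the $\SL_3$-action $\uu\mapsto\uu g$, $\vv\mapsto\vv g^{-T}$ to normalize $\uu$ to $\cc_1$ (up to a scalar rescaling, using the diagonal of $\SL_3$). The transvections $\de_1(t\uu_i)$ and $\de_2(t\vv_i)$ act triangularly on the two diagonal entries and on $\vv$, so a well-chosen sequence of them can clear the diagonal entries and the two components of $\vv$ orthogonal to $\cc_1$, while the norm constraint forces the remaining component of $\vv$ to vanish. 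The delicate part, which I expect to be the bulk of the work, is the bookkeeping: tracking how each chosen generator affects the eight entries of the octonion simultaneously, and verifying that the invariant constraints are preserved at each step so that the procedure terminates precisely at $e_2$ or $\uu_1$.
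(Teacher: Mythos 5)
First, note that the paper does not prove this statement at all: it is imported verbatim as Part~1 of Proposition~3.3 of~\cite{LZ_1}, so there is no internal proof to compare against. Judged on its own terms, your reduction framework is sound and standard: the $\G$-invariance of $\tr$ and $n$ makes the unordered root pair of $\xi^2-\tr(a)\xi+n(a)$ an orbit invariant, the scalars form singleton orbits because automorphisms fix $1_{\OO}$, and the identity $(a-\al_1 1_{\OO})(a-\al_2 1_{\OO})=0$ correctly produces either a nontrivial idempotent $e=(a-\al_1 1_{\OO})/(\al_2-\al_1)$ or a nonzero square-zero element $b=a-\al 1_{\OO}$, so that the whole proposition reduces to the two transitivity claims you isolate. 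The separation of the listed representatives is also complete (distinct invariants between and within families~2 and~3, and the singleton-orbit argument separating $\al 1_{\OO}$ from $O_3(\al)$, which share trace and norm).

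The genuine gap is that those two transitivity lemmas --- $\G$ acts transitively on nontrivial idempotents, and on nonzero elements with $b^2=0$ --- are exactly where all the content of the proposition lives, and you only sketch a plan for them. The sketch itself has an unresolved tension: you propose to force $\uu\neq\zero$ via $\hbar$ and normalize $\uu$ to $\cc_1$ by $\SL_3$, but the target $e_2$ has $\uu=\zero$, so the transvections $\de_1(t\uu_i),\de_2(t\vv_i)$ must at some stage annihilate $\uu$ as well; their action on the $\uu$-component is not triangular in the way your description suggests (e.g.\ $\de_1(\uu)$ sends $\uu'$ to $(\al'-\be'-\uu\cdot\vv')\uu+\uu'$, coupling it to the diagonal entries and to $\vv'$), and the claim that ``the norm constraint forces the remaining component of $\vv$ to vanish'' is asserted rather than derived. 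Until an explicit sequence of generators is exhibited and verified (or transitivity is obtained by another argument, e.g.\ an orbit-dimension/irreducibility count made rigorous), the proof is a correct outline rather than a proof. Concretely: state and prove the two lemmas, with the generator computations carried out, and the rest of your argument goes through.
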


\begin{definition}\label{def_canon}{}
\begin{enumerate}
\item[$\bullet$] The elements from  Proposition~\ref{prop_O_canon} are called {\it canonical octonions}. 

\item[$\bullet$] Given $a\in\OO$, the diagonal elements of the canonical octonion from $\G a$ are called eigenvalues $(\al_1,\al_2)\in\FF^2$ for $a$, where $\al_1\leq\al_2$. Note that both eigenvalues are solutions of the equation $\al^2 - \tr(a) \al + n(a) = 0$. 
\end{enumerate}
\end{definition}

The following remark is a consequence of Part 2 of Proposition 3.3 from~\cite{LZ_1}.

\begin{remark}\label{remark_orbs} Assume $\al,\al_i,\be,\be_i,\ga_i\in\FF$ for $i=1,2$. 

\smallskip
\noindent{\bf 1.} Two octonions $\al_1 1_{\OO} + \be_1 \uu_1$ and  $\al_2 1_{\OO} + \be_2 \uu_1$ belong to the same $\G$-orbit on $\OO$ if and only if 
\begin{enumerate}
\item[$\bullet$] $\al_1=\al_2$,

\item[$\bullet$] either $\be_1=\be_2=0$, or $\be_1$, $\be_2$ are non-zero.
\end{enumerate}

\smallskip
\noindent{\bf 2.} If $\ga_1\neq \ga_2$, then $\al 1_{\OO} + \be \uu_1 \not\in O_2(\ga_1,\ga_2)$. 
\end{remark}

 
\section{Polynomial equations}\label{section_main}

In this section we assume that the field $\FF$ is algebraically closed. Given a commutative associative polynomial $f(\xi)=\al_n \xi^n + \cdots +\al_1 \xi + \al_0\in\FF[\xi]$, where $\al_0,\ldots,\al_n\in\FF$, we write $f'(\xi)$ for its derivative. For each $x\in\OO$ we naturally define the substitution \[f(x)=\al_n x^n + \cdots +\al_1 x + \al_0 1_{\OO}\in\OO.\] 

Assume that $f(\xi)\in\FF[\xi]$ is a non-zero polynomial without \new{constant term} and $\ga\in\FF$. The \new{multiplicity} of a root $\xi_1\in\FF$ for the equation $f(\xi)=\ga$, where $\xi\in\FF$ is a variable, is $k>0$ such that $f(\xi) \new{ - \ga} = (\xi-\xi_1)^k g(\xi)$ for some  $g(\xi)\in\FF[\xi]$ with $g(\xi_1)\neq0$. If $k=1$, then root $\xi_1$ is called {\it \new{simple}}. If $k\geq 2$, then \new{the} root $\xi_1$ is called {\it multiple}.  It is trivial that
\begin{enumerate}
\item[$\bullet$] $\xi_1$ is a simple root if and only if $f(\xi_1)=\ga$ and $f'(\xi_1)\neq0$;

\item[$\bullet$] $\xi_1$ is a multiple root if and only if $f(\xi_1)=\ga$ and $f'(\xi_1)=0$.
\end{enumerate}

\begin{lemma}\label{lemma_fx}
Assume that $f(\xi)\in\FF[\xi]$ and $\al,\be\in\FF$. Then \[f(\al 1_{\OO} + \be \uu_1) = f(\al) 1_{\OO} + f'(\al)\be\uu_1.\]
\end{lemma}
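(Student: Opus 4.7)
The plan is to reduce the claim to the simple fact that $\uu_1^2=0$ and then expand $(\al 1_{\OO}+\be\uu_1)^n$ by a binomial-type argument.

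First I would verify that $\uu_1^2=0$. The quickest route is equation~(\ref{eq2}): since $\tr(\uu_1)=0$ and $n(\uu_1)=\matr{0}{\cc_1}{\zero}{0}\matr{0}{-\cc_1}{\zero}{0}\text{-entry}=0$ (directly from the matrix formula, $\cc_1\cdot\zero=0$), we get $\uu_1^2=\tr(\uu_1)\uu_1-n(\uu_1)1_{\OO}=0$. Alternatively one can just unpack the product $\uu_1\uu_1$ using the multiplication table.

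Next, set $a=\al 1_{\OO}+\be\uu_1$. By Artin's theorem the subalgebra generated by $a$ is associative, and it is automatically commutative (since $1_{\OO}$ is central and any element commutes with itself), so within this subalgebra the ordinary binomial expansion applies. Thus
\[
a^n=\sum_{k=0}^{n}\binom{n}{k}(\al 1_{\OO})^{n-k}(\be\uu_1)^k = \al^n 1_{\OO}+n\al^{n-1}\be\,\uu_1,
\]
because all terms with $k\geq 2$ vanish by $\uu_1^2=0$. (For $n=0$ this reads $1_{\OO}$, consistent with $f'$ contributing nothing from the constant term.)

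Finally I would write $f(\xi)=\sum_{k=0}^{n}\al_k\xi^k$ and sum the identity above against the coefficients, which gives
\[
f(a)=\sum_{k=0}^{n}\al_k\bigl(\al^k 1_{\OO}+k\al^{k-1}\be\,\uu_1\bigr)=f(\al)1_{\OO}+f'(\al)\be\,\uu_1,
\]
recognising $\sum_k\al_k\al^k=f(\al)$ and $\sum_k k\al_k\al^{k-1}=f'(\xi)|_{\xi=\al}$.

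There is no real obstacle: the content is entirely in the nilpotency $\uu_1^2=0$ and in invoking power-associativity (Artin) to legitimise writing $a^n$ unambiguously. The only thing to be careful about is handling the constant term $\al_0$ correctly, for which one just treats $a^0$ as $1_{\OO}$ so that the $k=0$ summand contributes to $f(\al)1_{\OO}$ but nothing to the $\uu_1$-component.
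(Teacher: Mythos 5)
Your proof is correct and follows essentially the same route as the paper: both reduce the lemma to the formula $(\al 1_{\OO}+\be\uu_1)^n=\al^n 1_{\OO}+n\al^{n-1}\be\uu_1$ (the paper by induction on $n$, you by a binomial expansion justified via Artin's theorem and $\uu_1^2=0$) and then conclude by linearity over the coefficients of $f$. The two ways of obtaining the power formula are interchangeable, so there is nothing substantive to add.
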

\begin{proof} For short, denote $a=\al 1_{\OO} + \be \uu_1$.

\medskip
\noindent{\bf 1.} Assume that $f(\xi)=\xi^n$ for some $n>0$. We prove by induction on $n$ that 
\begin{eq}\label{eq_claim1}
a^n=\al^n 1_{\OO} + n \al^{n-1} \be \uu_1.
\end{eq}

In case $n=1$ claim~(\ref{eq_claim1}) is trivial. 

Assume that claim~(\ref{eq_claim1}) holds for some $n>1$. 
Then \begin{multline*}a^{n+1} = (\al^n 1_{\OO} + n \al^{n-1}\be\uu_1) (\al 1_{\OO} + \be \uu_1) 
=\al^{n+1} 1_{\OO} + n \al^{n}\be\uu_1 + \al^n\be \uu_1 =\\
=\al^{n+1} 1_{\OO} + (n+1) \al^{n}\be\uu_1.
\end{multline*}
Therefore, claim~(\ref{eq_claim1}) holds for every $n>0$.

\medskip
\noindent{\bf 2.} Assume that $f(\xi)=\al_n \xi^n + \cdots +\al_1 \xi + \al_0\in\FF[\xi]$ for some $\al_0,\ldots,\al_n\in\FF$ and $n\geq0$. Note that in case $n=0$ we have $f(a)=\al_0 1_{\OO}$ and the claim of the lemma holds. 

For $n>0$ we apply part 1 to obtain that
$$f(a) = \sum_{i=1}^n \al_i a^i + \al_0 1_{\OO}  = \sum_{i=1}^n  \al_i (\al^i 1_{\OO} + i \al^{i-1}\be \uu_1) + \al_0 1_{\OO}, $$
and the required statement is proven.
\end{proof}

\begin{theorem}\label{theo_main} Assume that $f(\xi)\in\FF[\xi]$ is a non-zero polynomial without \new{constant} term and $c\in\OO$.  Acting by $\G$ on the equation $f(x)=c$, where $x\in\OO$ is a variable, we can assume that $c$ is a canonical  octonion from Proposition~\ref{prop_O_canon}.  Let $X\subset \OO$  be the set of all  solutions of the equation $f(x)=c$. Then 
\begin{enumerate}
\item[1.] in case $c=\ga 1_{\OO}$ for some $\ga\in\FF$, we have
$$\begin{array}{rl}
X= & \big\{\xi_1 1_{\OO} \,\big|\, \xi_1\in\FF \text{ satisfies } f(\xi_1)=\ga\big\} \bigcup \\
& \big\{ O_2(\xi_1,\xi_2) \,\big|\, \xi_1,\xi_2\in\FF \text{ satisfy } f(\xi_1)=f(\xi_2)=\ga \text{ and } \xi_1<\xi_2  \big \} \bigcup  \\
& \big\{O_3(\xi_1)\,\big|\,  \xi_1\in\FF \text{ is a multiple root for } f(\xi)=\ga \big\}; \\
\end{array}
$$

\item[2.] in case $c=\ga_1 e_1 +\ga_2 e_2$ for some $\ga_1,\ga_2\in\FF$ with $\ga_1<\ga_2$, we have
$$X=\big\{\xi_1 e_1 + \xi_2 e_2 \,\big|\, \xi_1,\xi_2\in\FF \text{ satisfy } f(\xi_1)=\ga_1 \text{ and }f(\xi_2)=\ga_2 \big \}; $$

\item[3.] in case $c=\ga 1_{\OO} +\uu_1$ for some $\ga\in\FF$, we have
$$X=\Bigg\{\xi_1 1_{\OO} + \frac{1}{f'(\xi_1)}\uu_1 \,\Bigg|\, \xi_1\in\FF \text{ is a simple root for } f(\xi)=\ga \Bigg \}.$$
\end{enumerate}
\end{theorem}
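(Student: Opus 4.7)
The plan is to exploit the $\G$-equivariance of the equation $f(x) = c$: since every $g \in \G$ is an algebra automorphism fixing the scalars, one has $g(f(x)) = f(gx)$, so $x$ solves $f(x) = c$ if and only if $gx$ solves $f(y) = gc$. This both justifies the reduction to $c$ in canonical form from Proposition~\ref{prop_O_canon} and allows us to attack any candidate solution $x$ by writing $x = g x_0$ with $x_0$ canonical; applying $f$ then forces $f(x_0) = g^{-1} c$ to lie in $\G c$. The task thus reduces to two subproblems: (i) enumerate the canonical $x_0$ with $f(x_0) \in \G c$, and (ii) for each such $x_0$, describe the set of all $gx_0$ as $g$ ranges over automorphisms satisfying $g f(x_0) = c$.

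For (i) I compute $f(x_0)$ for each canonical form. When $x_0 = \xi_1 1_{\OO}$ one trivially gets $f(x_0) = f(\xi_1) 1_{\OO}$; when $x_0 = \xi_1 e_1 + \xi_2 e_2$, orthogonality of the idempotents $e_1, e_2$ gives $x_0^n = \xi_1^n e_1 + \xi_2^n e_2$, hence $f(x_0) = f(\xi_1) e_1 + f(\xi_2) e_2$; when $x_0 = \al 1_{\OO} + \uu_1$, Lemma~\ref{lemma_fx} yields $f(x_0) = f(\al) 1_{\OO} + f'(\al) \uu_1$. The disjoint decomposition of $\OO$ into scalar, $O_2$, and $O_3$ orbits from Proposition~\ref{prop_O_canon}, together with the criteria in Remark~\ref{remark_orbs}, then determine for each canonical $c$ exactly which of these images lie in $\G c$, giving the admissible $x_0$.

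Case~1 is then immediate: $\G c = \{\ga 1_{\OO}\}$ is a singleton, so each of the three canonical types of $x_0$ can contribute, with the multiple-root condition in the $O_3(\al)$ case arising from the requirements $f(\al) = \ga$ and $f'(\al) = 0$ together. For case~2, the scalar and the $\al 1_{\OO} + \uu_1$ types of $x_0$ are excluded (the former produces a scalar image, which is not in $O_2(\ga_1,\ga_2)$; the latter is excluded by Remark~\ref{remark_orbs}.2), leaving $x_0 = \xi_1' e_1 + \xi_2' e_2$ with $\{f(\xi_1'), f(\xi_2')\} = \{\ga_1, \ga_2\}$. The two orderings of this pair are realized via the element $\hbar$ from~(\ref{eq_h}), which swaps $e_1$ and $e_2$, and this accounts for the unordered appearance of $(\xi_1, \xi_2)$ in the theorem's description.

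The main obstacle is case~3, where both existence and uniqueness of a solution per simple root $\al$ of $f(\xi) = \ga$ must be established. For existence I exhibit an explicit diagonal element of $\SL_3 \subset \G$ that scales $\uu_1$ by $1/f'(\al)$, and check that it sends $x_0 = \al 1_{\OO} + \uu_1$ to the claimed $\al 1_{\OO} + \frac{1}{f'(\al)}\uu_1$ while simultaneously sending $f(x_0) = \ga 1_{\OO} + f'(\al)\uu_1$ to $c$. Uniqueness is the delicate point: any two $g_1, g_2$ with $g_i f(x_0) = c$ differ by an element of the $\G$-stabilizer of $f(x_0)$, so one must show this stabilizer coincides with the stabilizer of $x_0$. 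Both reduce to the stabilizer of $\uu_1$, since the scalar summand $\ga 1_{\OO}$ (resp.\ $\al 1_{\OO}$) is fixed by all of $\G$ and a non-zero rescaling of $\uu_1$ does not enlarge the set of automorphisms fixing it. This forces $g_1 x_0 = g_2 x_0$ and gives exactly one solution per simple root, as required.
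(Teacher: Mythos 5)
Your proposal is correct and follows essentially the same route as the paper: $\G$-equivariance reduces both $c$ and the candidate solution to canonical form, Lemma~\ref{lemma_fx} handles the $O_3$-type candidates, Remark~\ref{remark_orbs} and Proposition~\ref{prop_O_canon} rule out the cross-type cases, $\hbar$ accounts for the swapped ordering in case~2, and a stabilizer argument pins down the unique solution per root in cases~2 and~3. The only cosmetic differences are in case~3, where the paper verifies existence by direct substitution via Lemma~\ref{lemma_fx} and reads off $g\uu_1=f'(\xi_1)\uu_1$ from the orbit equation while you use an explicit diagonal element of $\SL_3$ and an equality of stabilizers, and in case~2, where you should still make explicit (as the paper does via Lemma~2.1 of~\cite{LZ_1}) that $\St_{\G}(\ga_1 e_1+\ga_2 e_2)=\SL_3$ fixes all diagonal octonions, so that each admissible canonical candidate yields exactly one solution.
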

\begin{proof} Assume that there exists some $x\in X$. Consider  $g\in\G$ such that $gx$ is canonical. Note that 
\begin{eq}\label{eq_gxc}
f(gx)= g c.
\end{eq}

\medskip
\noindent{\bf 1.} Assume that $c=\ga 1_{\OO}$ for some $\ga\in\FF$. Then we can rewrite equality~(\ref{eq_gxc}) as
\begin{eq}\label{eq_gxc1}
f(gx)= \ga 1_{\OO}.
\end{eq}

\noindent{}One of the following possibilities holds:
\begin{enumerate}
\item[(a)] $gx=\xi_1 1_{\OO}$ for some $\xi_1\in \FF$. Equality~(\ref{eq_gxc1}) is equivalent to $f(\xi_1)=\ga$. Thus, $x=g^{-1} \xi_1 1_{\OO}=\xi_1 1_{\OO}$. 

\item[(b)] $gx=\xi_1 e_1 + \xi_2e_2$ for some $\xi_1,\xi_2\in \FF$ with $\xi_1<\xi_2$. Equality~(\ref{eq_gxc1}) is equivalent to $f(\xi_1)=f(\xi_2)=\ga$. Therefore, $x\in O_2(\xi_1,\xi_2)\subset X$.

\item[(c)] $gx=\xi_1 1_{\OO} +\uu_1$ for some $\xi_1\in \FF$. Lemma~\ref{lemma_fx}  implies that equality~(\ref{eq_gxc1}) is equivalent to $f(\xi_1)=\ga$ and $f'(\xi_1)=0$. Therefore,  $x\in O_3(\xi_1)\subset X$. 
\end{enumerate}

\medskip
\noindent{\bf 2.} Assume that $c=\ga_1 e_1 + \ga_2 e_2$ for some $\ga_1,\ga_2\in\FF$ with $\ga_1<\ga_2$. We have that one of the following possibilities holds:
\begin{enumerate}
\item[(a)] $gx=\xi_1 1_{\OO}$ for some $\xi_1\in \FF$. Equality~(\ref{eq_gxc}) implies that 
$$f(\xi_1) 1_{\OO} \in O_2(\ga_1,\ga_2);$$ 
a contradiction to Proposition~\ref{prop_O_canon}.

\item[(b)] $gx=\xi_1 e_1 + \xi_2e_2$ for some $\xi_1,\xi_2\in \FF$ with $\xi_1<\xi_2$. Equality~(\ref{eq_gxc}) is equivalent to 
$$f(\xi_1)e_1 + f(\xi_2) e_2 = g(\ga_1 e_1 + \ga_2 e_2).$$
Therefore,
$$f(\xi_1)e_1 + f(\xi_2) e_2 \in O_2(\ga_1,\ga_2).$$

Let $f(\xi_1)<f(\xi_2)$. Then Proposition~\ref{prop_O_canon} implies that $f(\xi_1)=\ga_1$ and $f(\xi_2)=\ga_2$. Hence, $g\in\St_{\G}(\ga_1 e_1 + \ga_2 e_2)$. Since $\St_{\G}(\ga_1 e_1 + \ga_2 e_2)=\SL_3$ by Lemma 2.1 of~\cite{LZ_1}, we have that  $x=g^{-1} (\xi_1 e_1 + \xi_2 e_2) = \xi_1 e_1 + \xi_2 e_2$.

In case $f(\xi_1)=f(\xi_2)$ we obtain a contradiction with Proposition~\ref{prop_O_canon}.

Let $f(\xi_1)>f(\xi_2)$. The equality $\hbar(f(\xi_1)e_1 + f(\xi_2) e_2) = \hbar g(\ga_1 e_1 + \ga_2 e_2)$ implies $f(\xi_2)e_1 + f(\xi_1) e_2 = \hbar g(\ga_1 e_1 + \ga_2 e_2)$. Thus, it follows from Proposition~\ref{prop_O_canon} that $f(\xi_1)=\ga_2$ and $f(\xi_2)=\ga_1$.  Hence, $\hbar g\in\St_{\G}(\ga_1 e_1 + \ga_2 e_2)$. Since $\St_{\G}(\ga_1 e_1 + \ga_2 e_2)=\SL_3$ by Lemma 2.1 of~\cite{LZ_1}, we have that  $x=(\hbar g)^{-1} \hbar (\xi_1 e_1 + \xi_2 e_2) = \xi_2 e_1 + \xi_1 e_2$.

\item[(c)] $gx=\xi_1 1_{\OO} +\uu_1$ for some $\xi_1\in \FF$. Lemma~\ref{lemma_fx} together with equality~(\ref{eq_gxc}) implies that 
$$f(\xi_1)1_{\OO} + f'(\xi_1) \uu_1 \in O_2(\ga_1,\ga_2);$$
a contradiction by part 2 of Remark~\ref{remark_orbs}.
\end{enumerate}

\medskip
\noindent{\bf 3.} Assume that $c=\ga 1_{\OO} + \uu_1$ for some $\ga\in\FF$.
We have that one of the following possibilities holds:
\begin{enumerate}
\item[(a)] $gx=\xi_1 1_{\OO}$ for some $\xi_1\in \FF$. Equality~(\ref{eq_gxc})  implies that
$$f(\xi_1) 1_{\OO}  \in O_3(\ga);$$
a contradiction to Proposition~\ref{prop_O_canon}.

\item[(b)] $gx=\xi_1 e_1 + \xi_2e_2$ for some $\xi_1,\xi_2\in \FF$ with $\xi_1<\xi_2$. Equality~(\ref{eq_gxc}) implies that
$$f(\xi_1)e_1 + f(\xi_2) e_2  \in O_3(\ga);$$
a contradiction to Proposition~\ref{prop_O_canon}.

\item[(c)] $gx=\xi_1 1_{\OO} +\uu_1$ for some $\xi_1\in \FF$. Lemma~\ref{lemma_fx}  implies that equality~(\ref{eq_gxc}) is equivalent to 
\begin{eq}\label{eq_gxc3c}
f(\xi_1) 1_{\OO} + f'(\xi_1)\uu_1 = g(\ga 1_{\OO} +\uu_1).
\end{eq}

\noindent{}It follows from part 1 of Remark~\ref{remark_orbs} that equality~(\ref{eq_gxc3c}) is equivalent to $f(\xi_1)=\ga$, $f'(\xi_1)\neq0$, and $g \uu_1 = f'(\xi_1)\uu_1$. Thus,
$$x=g^{-1}(\xi_1 1_{\OO} +\uu_1) = \xi_1 1_{\OO} + g^{-1}\uu_1 =
\xi_1 1_{\OO} + \frac{1}{f'(\xi_1)}\uu_1.$$
On the other hand, Lemma~\ref{lemma_fx} implies that for every $\xi_1\in\FF$ with $f(\xi_1)=\ga$ and $f'(\xi_1)\neq0$ we have  
$$f \Bigg(\xi_1 1_{\OO} + \frac{1}{f'(\xi_1)}\uu_1 \Bigg) = \ga 1_{\OO} + \uu_1.$$
\end{enumerate}

\end{proof}

The following corollaries are immediate consequences of Theorem~\ref{theo_main}.

\begin{corollary}\label{cor_number_solutions} 
Assume that $f(\xi)\in\FF[\xi]$ is a non-zero polynomial of degree $n> 1$ without constant term and $c\in\OO$.  Denote by $X\subset\OO$ the set of all solutions of the equation $f(x)=c$. Let $(\ga_1,\ga_2)\in\FF^2$ be \new{the} eigenvalues of $c$. Then 
\begin{enumerate}

\item[(a)]  $X$ is infinite if and only if $\ga_1=\ga_2$ and $c=\ga_1 1_{\OO}$;

\item[(b)] $X$ is empty if and only if $\ga_1=\ga_2$,  $c\neq\ga_1 1_{\OO}$, and the equation $f(\xi)=\ga_1$, where $\xi\in\FF$ is a variable, does not have any simple root;

\item[(c)] $|X|\leq n$ in case $\ga_1= \ga_2$ and  $c\neq\ga_1 1_{\OO}$;

\item[(d)] $1\leq |X|\leq n^2$ in case $\ga_1\neq \ga_2$.
\end{enumerate}
\end{corollary}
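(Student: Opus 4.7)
The plan is to reduce to Theorem~\ref{theo_main} by putting $c$ into canonical form. The eigenvalues $(\ga_1,\ga_2)$ depend only on $\tr(c)$ and $n(c)$ (via $\xi^2-\tr(c)\xi+n(c)=0$), both of which are $\G$-invariant, so replacing $c$ by a $\G$-conjugate leaves both $(\ga_1,\ga_2)$ and $|X|$ unchanged. The three canonical types of Proposition~\ref{prop_O_canon} then match the three cases of the corollary: $c=\ga 1_{\OO}$ corresponds to $\ga_1=\ga_2=\ga$ with $c=\ga_1 1_{\OO}$; $c=\ga_1 e_1+\ga_2 e_2$ with $\ga_1<\ga_2$ corresponds to $\ga_1\neq \ga_2$; and $c=\ga 1_{\OO}+\uu_1$ corresponds to $\ga_1=\ga_2=\ga$ with $c\neq \ga_1 1_{\OO}$ (since its quadratic $\xi^2-2\ga\xi+\ga^2=(\xi-\ga)^2$ has a double root $\ga$).

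For part~(a), I would argue that in case~1 the set $X$ is \emph{always} infinite when $n>1$. Theorem~\ref{theo_main} tells us that $X$ contains the orbit $O_3(\xi_1)$ for each multiple root $\xi_1$ of $f(\xi)=\ga_1$ and the orbit $O_2(\xi_1,\xi_2)$ for each pair $\xi_1<\xi_2$ of distinct roots, and both sorts of orbits are positive-dimensional, hence infinite. Since $\deg f=n\geq 2$ and $\FF$ is algebraically closed, $f(\xi)-\ga_1$ either has at least two distinct roots (producing an $O_2$) or is equal to $\al_n(\xi-\xi_0)^n$ with $\xi_0$ a root of multiplicity $n\geq 2$ (producing an $O_3$); in either event, $X$ contains an infinite orbit. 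Conversely, the formulas of Theorem~\ref{theo_main} in cases~2 and~3 yield only finitely many octonions, so $X$ infinite forces case~1, giving~(a). For part~(b), in case~2 the sets $\{\xi\in\FF\,|\,f(\xi)=\ga_i\}$ are nonempty ($f$ is non-constant over an algebraically closed field), so $X\neq\emptyset$; combining with~(a), emptiness can occur only in case~3, where Theorem~\ref{theo_main} describes $X$ as indexed by the simple roots of $f(\xi)=\ga_1$.

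For part~(c), case~3 identifies $X$ with the set of simple roots of $f(\xi)=\ga_1$, of which there are at most $\deg f=n$. For part~(d), case~2 identifies $X$ with the Cartesian product $\{\xi\in\FF\,|\,f(\xi)=\ga_1\}\times\{\xi\in\FF\,|\,f(\xi)=\ga_2\}$, and each factor has between $1$ and $n$ elements, so $1\leq|X|\leq n^2$. The main subtlety is in~(a): one must rule out a ``purely finite'' case~1 by using both the $O_2$ contribution (distinct roots) \emph{and} the $O_3$ contribution (a single repeated root of multiplicity $n$); the hypothesis $n>1$ is crucial, as it guarantees that in case~1 at least one of these two types of orbits appears.
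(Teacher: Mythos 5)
Your proof is correct and follows exactly the route the paper intends: the paper states this corollary without proof as an ``immediate consequence'' of Theorem~\ref{theo_main}, and your case-by-case reading of that theorem (including the key dichotomy in case~1 between at least two distinct roots, giving an infinite $O_2$-orbit, and a single root of multiplicity $n\geq 2$, giving an infinite $O_3$-orbit) is the natural way to fill in the details.
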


\begin{corollary}\label{cor_main} Assume that $n>1$ is an integer and $c\in\OO$. Acting by $\G$ on the equation $x^n=c$, where $x\in\OO$ is a variable, we can assume that $c$ is a canonical  octonion from Proposition~\ref{prop_O_canon}.  Let $X\subset\OO$  be the set of all  solutions of the equation $x^n=c$. Then 
\begin{enumerate}
\item[1.] in case $c=\ga 1_{\OO}$ for some $\ga\in\FF$, we have
$$
\begin{array}{rl}
X= & \big\{\xi_1 1_{\OO} \,\big|\, \xi_1\in\FF \text{ satisfies } \xi_1^n=\ga  \big \} \bigcup \\
& \big\{ O_2(\xi_1,\xi_2) \,\big|\, \xi_1,\xi_2\in\FF \text{ satisfy } \xi_1^n=\xi_2^n=\ga \text{ and } \xi_1<\xi_2  \big \} \bigcup Y, \text{ where }\\
Y= & \big\{O_3(\xi_1)\,\big|\,  \xi_1\in\FF \text{ satisfies } \xi_1^n=\ga\big\}  \;\text{ if } p | n \text{ or }\ga=0; \quad \text{and }
Y=\emptyset,\; \text{ otherwise}; \\
\end{array}
$$

\item[2.] in case $c=\ga_1 e_1 +\ga_2 e_2$ for some $\ga_1,\ga_2\in\FF$ with $\ga_1<\ga_2$, we have
$$X=\big\{\xi_1 e_1 + \xi_2 e_2 \,\big|\, \xi_1,\xi_2\in\FF \text{ satisfy } \xi_1^n=\ga_1 \text{ and }\xi_2^n=\ga_2 \big \}; $$

\item[3.] in case $c=\ga 1_{\OO} +\uu_1$ for some $\ga\in\FF$, we have
$$X=\Bigg\{\xi_1 1_{\OO} + \frac{\xi_1}{n \ga}\uu_1 \,\Bigg|\, \xi_1\in\FF \text{ satisfies } \xi_1^n=\ga \Bigg \}, \quad \text{ if } p \not{|} n \text{ and }\ga\neq 0;  $$
$$X=\emptyset, \quad \text{ if } p | n \text{ or }\ga= 0.
\qquad\qquad\qquad\qquad\qquad\qquad\qquad\qquad\qquad\quad\;\;$$
\end{enumerate}
\end{corollary}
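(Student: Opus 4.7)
My plan is to derive this corollary as a direct specialization of Theorem~\ref{theo_main} with $f(\xi)=\xi^n$, tracking exactly when the multiplicity conditions on roots translate into constraints on $p=\Char \FF$ and $\ga$. Since $f'(\xi)=n\xi^{n-1}$, I first would record the following translation: a root $\xi_1$ of $f(\xi)=\ga$ is multiple iff $n\xi_1^{n-1}=0$ in $\FF$, which happens iff $p\mid n$ or $\xi_1=0$; and $\xi_1$ is simple iff $p\nmid n$ and $\xi_1\neq 0$. This is the only conceptual ingredient beyond Theorem~\ref{theo_main}.

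For parts 1 and 2, the work is essentially substitution into the theorem. In part 1 with $c=\ga 1_{\OO}$, the first two pieces of $X$ are immediate from cases (a) and (b) of the theorem. For the third piece $Y$, I would split into subcases: if $p\mid n$, then $f'$ is identically zero, so every $\xi_1$ with $\xi_1^n=\ga$ is multiple and contributes $O_3(\xi_1)$; if $p\nmid n$ and $\ga=0$, then the only root is $\xi_1=0$, and it is multiple because $f'(0)=0$, so $Y=\{O_3(0)\}$, which still matches the stated description $\{O_3(\xi_1):\xi_1^n=\ga\}$; if $p\nmid n$ and $\ga\neq 0$, then any root $\xi_1$ satisfies $\xi_1\neq 0$ and $f'(\xi_1)\neq 0$, giving no multiple roots, hence $Y=\emptyset$. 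Part 2 is just the verbatim statement of case 2 of the theorem with $f(\xi)=\xi^n$.

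For part 3 with $c=\ga 1_{\OO}+\uu_1$, I would apply case 3 of the theorem, noting that the existence of any simple root requires $p\nmid n$ and $\ga\neq 0$ (else every root is multiple and $X=\emptyset$). When these conditions hold, I rewrite the coefficient of $\uu_1$: using $\xi_1^n=\ga$ with $\xi_1\neq 0$, I have $\xi_1^{n-1}=\ga/\xi_1$, so $\frac{1}{f'(\xi_1)}=\frac{1}{n\xi_1^{n-1}}=\frac{\xi_1}{n\ga}$, recovering the stated formula. In the remaining case ($p\mid n$ or $\ga=0$), every root of $\xi^n=\ga$ is multiple and so case 3(c) of the theorem produces no solution; cases 3(a) and 3(b) of the theorem already give contradictions, hence $X=\emptyset$.

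There is no real obstacle here: the only mildly nontrivial bookkeeping is the case split in part 1 to confirm that the compact description $Y=\{O_3(\xi_1):\xi_1^n=\ga\}$ under the single condition ``$p\mid n$ or $\ga=0$'' agrees with the set of multiple roots of $\xi^n=\ga$. Everything else is substitution and the algebraic simplification $1/(n\xi_1^{n-1})=\xi_1/(n\ga)$.
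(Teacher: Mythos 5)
Your proposal is correct and follows exactly the route the paper intends: the paper simply states that Corollary~\ref{cor_main} is an immediate consequence of Theorem~\ref{theo_main}, and your specialization $f(\xi)=\xi^n$, $f'(\xi)=n\xi^{n-1}$, with the case analysis on $p\mid n$ and $\ga=0$ and the simplification $1/(n\xi_1^{n-1})=\xi_1/(n\ga)$, is precisely the bookkeeping being left to the reader.
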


\subsection*{Acknowledgements} We sincerely thank the four anonymous referees for their valuable and helpful comments. The first author was supported by FAPESP 2023/17918-2. The second author was supported by a state assignment for the Sobolev Institute of Mathematics, Siberian Branch, Russian Academy of Sciences, FWNF-2022-0003.




{\small\bibliography{cimart}}

\EditInfo{December 3, 2024}{March 5, 2025}{Ivan Kaygorodov, David Towers}
\end{document}